\documentclass[11pt]{amsart}
\usepackage{amsmath}
\usepackage{amsthm}
\usepackage{amsfonts} 

\usepackage{mathabx}
\usepackage{amssymb}
\usepackage[english]{babel} 
\usepackage[colorlinks = true,
            linkcolor = blue,
            urlcolor  = blue,
            citecolor = blue,
            anchorcolor = blue]{hyperref} 
            
\usepackage{cleveref}
\usepackage{calc} 
\usepackage{dsfont}
\usepackage{enumitem}
\usepackage{placeins}

\usepackage{graphicx}
\usepackage{subcaption}

\usepackage{float}

\usepackage{physics}
\usepackage{tikz}

\allowdisplaybreaks
\usepackage[a4paper, lmargin=0.1666\paperwidth, rmargin=0.1666\paperwidth, tmargin=0.1111\paperheight, bmargin=0.1111\paperheight]{geometry} 
\usepackage[all]{nowidow}
\usepackage{lipsum}

\theoremstyle{plain}
\newtheorem{theorem}{Theorem}[section]
\newtheorem{conjecture}[theorem]{Conjecture}

\newtheorem{lemma}[theorem]{Lemma}

\theoremstyle{definition}
\newtheorem{definition}[theorem]{Definition}

\theoremstyle{remark}

\newtheoremstyle{named}{}{}{\itshape}{}{\bfseries}{.}{.5em}{\thmnote{#3}#1}
\theoremstyle{named}

\setlist[enumerate]{itemsep=0pt, topsep=2pt}

\usepackage{algorithm}
\usepackage{algpseudocode}

\newcommand{\E}{\mathbb E}
\newcommand{\Pp}{\mathbb P}

\title{On Feige's conjecture}

\author{Zipei Nie} 
\address{Department of Mathematics\\ University of Illinois at Urbana--Champaign\\ Urbana, IL 61801, USA} \email{znie@illinois.edu} 

\author{Jiaye Wei} \address{Chair of Discrete Optimization\\ Institute of Mathematics\\ École Polytechnique Fédérale de Lausanne \\ 1015 Lausanne, Switzerland} \email{jiaye.wei@epfl.ch}
	
\date{\today}

\begin{document}

\begin{abstract} We present a short proof of Feige's conjecture: for \(n\) independent nonnegative random variables with expectation one, the probability that their sum is less than \(n+1\) is at least \(\left(\frac{n}{n+1}\right)^n\ge \frac{1}{e}\). The proof was obtained with the assistance of GPT-5.6 Sol and builds on the recent breakthrough of Vlassis and Thomas establishing Gaffke's conjecture on the finite-sample validity of a distribution-free \(p\)-value. We also discuss the implications of the subsequent work of Ming, Ramdas, Shen, Wang, and Waudby-Smith. \end{abstract}

\maketitle
\section{Introduction}
For independent nonnegative random variables \(X_1,\ldots,X_n\) with respective expectations \(\mu_1,\ldots,\mu_n\leq 1\), Feige \cite{feige2006sums} proved the concentration inequality
\begin{equation}\label{eq: Feige}
 \Pp\left(\sum_{i=1}^n X_i<\sum_{i=1}^n \mu_i+\delta\right)\geq \min\left( \frac{\delta}{1+\delta}, \frac{1}{13} \right)
\end{equation}
for every \(\delta>0\), without imposing any assumptions on the variances of the \(X_i\). 

Because it provides a dimension-free lower-tail bound under only independence and nonnegativity, Feige's inequality has found applications across a wide range of areas, including randomized graph algorithms \cite{feige2006sums}, extremal combinatorics \cite{alon2012nonnegative,frankl2022erdHos}, randomized search heuristics \cite{dang2015simplified,dang2019level,lissovoi2019time}, and the study of innovation diffusion in social networks \cite{arieli2020speed}.

Beyond its applications, Feige's inequality is also of intrinsic probabilistic interest. Feige \cite{feige2006sums} conjectured that the constant \(\frac{1}{13}\) in \eqref{eq: Feige} can be replaced by \(\frac{1}{e}\) for every \(\delta>0\).

\begin{conjecture}[Feige's conjecture for arbitrary \(\delta\)]\label{conj:main-2}
Let \(X_1,\ldots,X_n\) be independent nonnegative random variables with \(\E \left(X_i\right)=\mu_i\le 1\) for each $i=1,\ldots, n$. Then, for every \(\delta>0\),
\[
\Pp\left(\sum_{i=1}^n X_i<\sum_{i=1}^n \mu_i+\delta\right)\ge\min\left(\frac{\delta}{1+\delta},\frac{1}{e}\right).
\]
\end{conjecture}

By replacing \(X_i\) with \(X_i+1-\mu_i\), one may equivalently assume that all the random variables have expectation one. The case \(\delta=1\) is commonly known as Feige's conjecture.

\begin{conjecture}[Feige's conjecture] \label{conj:main}
Let \(X_1,\ldots,X_n\) be independent nonnegative random variables with expectation one. Then
\[
\Pp\left(\sum_{i=1}^n X_i<n+1\right)\ge\frac{1}{e}.
\]
\end{conjecture}

Prior to the present work, the constant in Feige's conjecture was successively improved from \(\frac{1}{13}\) to \(\frac{1}{8}\) by He, Zhang, and Zhang \cite{he2010bounding}, to \(\frac{7}{50}\) by Garnett \cite{garnett2020small}, and to \(0.1798\) by Guo, He, Ling, and Liu \cite{guo2020bounding}, which was the best previously known result. Related partial results were obtained in \cite{paulin2017some,alqasem2024conjecture,egozcue2025short}.

Our main result is the following bound for every \(0<\delta\le 1\).

\begin{theorem}\label{thm:main} Let \(X_1,\ldots,X_n\) be independent nonnegative random variables with expectation one. Then, for every \(0<\delta\leq 1\), 
\[
\Pp\left(\sum_{i=1}^n X_i<n+\delta\right) \geq \delta\left(\frac{n}{n+\delta}\right)^n. \]
\end{theorem}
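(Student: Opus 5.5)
The plan is to reduce to two-point distributions, map the problem onto the Vlassis--Thomas theorem (Gaffke's conjecture), and check that its bound evaluates to \(\delta\left(\frac{n}{n+\delta}\right)^n\).

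\textbf{Reduction to two-point laws.} Fix \(\delta\in(0,1]\) and write \(S=\sum_{i=1}^n X_i\). The quantity \(\Pp(S<n+\delta)\) is affine in the law of each \(X_i\) separately when the other laws are held fixed. The set of laws on \([0,\infty)\) with mean one is convex, and its extreme points are the laws supported on at most two points \(a_i\le 1\le b_i\). So we may assume, one coordinate at a time, that each \(X_i\) takes the value \(a_i\) with probability \(p_i\) and \(b_i\) with probability \(1-p_i\), where \(p_ia_i+(1-p_i)b_i=1\). A compactness or truncation argument is needed to justify minimizing over this family. In particular, we may let \(b_i\to\infty\) or keep \(b_i\) in a bounded range by noting that only \(\min(X_i,n+\delta)\) matters for the event.

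\textbf{Reformulation via uniforms.} Next, realize \(X_i=F_i^{-1}(U_i)\) with independent uniforms \(U_i\), where \(F_i^{-1}\) is the quantile function. The event \(\{S<n+\delta\}\) then becomes a statement about the region under a monotone function of \((U_1,\dots,U_n)\). The mean-one constraint becomes \(\int_0^1 F_i^{-1}=1\).

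\textbf{Applying Gaffke's conjecture.} The Vlassis--Thomas theorem (Gaffke's conjecture) says that, for order statistics \(U_{(1)}\le\dots\le U_{(n)}\) of i.i.d.\ uniforms, the Gaffke bound, a spacings-weighted sum of the sorted observations, is a valid upper confidence bound for the mean. Validity means the probability that it falls below the true mean is at most the nominal level. My plan is to apply it to the rescaled variables \(Y_i=\min(X_i,n+\delta)/(n+\delta)\in[0,1]\). This rescaling is harmless because truncation only helps on the event. Each \(Y_i\) has mean \(m_i=\E\min(X_i,n+\delta)/(n+\delta)\) close to \(1/(n+\delta)\). Then:
\begin{enumerate}
\item Write the complement event \(\{S\ge n+\delta\}\) as the event that the Gaffke-type statistic of the \(Y_i\) is too small relative to the mean.
\item Use a symmetrization or majorization step to pass from non-identical \(X_i\) to the i.i.d.\ setting, for example by randomly permuting the coordinates.
\item Evaluate the nominal level, which should produce \(1-\delta\left(\frac{n}{n+\delta}\right)^n\).
\end{enumerate}
A sanity check is the extremal example where each \(X_i\) is \(0\) or \(\frac{n+\delta}{\,\cdot\,}\)-type. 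For such two-point laws, the probability that all \(X_i\) take their small value is a product of the form \(\left(\frac{n}{n+\delta}\right)^n\). The extra factor \(\delta\) should come from the boundary mass at level \(n+\delta\).

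\textbf{Main obstacle.} I expect the matching in the Gaffke step to be the hard part. That step must show that the Feige event for independent, non-identically distributed, mean-one variables is dominated by Gaffke's event for i.i.d.\ uniforms, with exactly the right threshold, so that the constant \(\delta\left(\frac{n}{n+\delta}\right)^n\) comes out sharply. My first attempt would be to prove this after the two-point reduction, where \(\{S<n+\delta\}\) becomes a union of orthant events in the \(U_i\). I would then compare the measure of that union coordinatewise with the Gaffke region, using the fact that only \(\delta\le 1\) is allowed: this means at most one large value \(b_i\) can appear on the event. The restriction to \(\delta\le1\) is presumably what makes this combinatorial comparison exact.
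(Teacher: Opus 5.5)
\textbf{Main gap: the quantitative step is missing.} You have the right external input (Vlassis--Thomas) and the right shape of argument in step~1. However, the proposal stops where the proof begins. Step~3, ``evaluate the nominal level, which should produce $1-\delta(\frac{n}{n+\delta})^n$'', is the entire quantitative content, and you give no mechanism for it. What has to be proved is the deterministic bound
\[
K_n(x):=\Pp\Bigl(\sum_{i=1}^n x_iD_i\le 1\Bigr)\le 1-\delta\Bigl(\frac{n}{n+\delta}\Bigr)^n
\quad\text{whenever } x\in[0,\infty)^n \text{ and } \sum_{i=1}^n x_i\ge n+\delta,
\]
with $(D_0,\ldots,D_n)\sim\operatorname{Dir}(1,\ldots,1)$. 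Given this bound, set $\alpha=1-\delta(\frac{n}{n+\delta})^n$. Then $\{\sum_i X_i\ge n+\delta\}\subseteq\{K_n(X)\le\alpha\}$, and the validity statement $\Pp(K_n(X)\le\alpha)\le\alpha$ finishes the proof.

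The missing idea is that this bound is convex geometry, not combinatorics. $D$ is uniform on the $n$-simplex. With $S=\sum_i x_i$, the affine function $\sum_i x_iD_i-\frac{S}{n+1}$ vanishes at the centroid and attains its minimum $-\frac{S}{n+1}$ at the vertex $(1,0,\ldots,0)$. When $S\ge n+\delta$, the event $\{\sum_i x_iD_i>1\}$ contains the section of the simplex where this function exceeds $t\frac{S}{n+1}$, with $t=\frac{1-\delta}{n+\delta}\ge 0$. The paper then applies Letwin's Theorem~4, an extension of Gr\"unbaum's inequality to hyperplanes at such a controlled distance beyond the centroid. It gives volume fraction at least $(\frac{n}{n+1})^n(1+t)^{n-1}(1-nt)$, which equals exactly $\delta(\frac{n}{n+\delta})^n$. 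For $\delta=1$ the hyperplane passes through the centroid, and classical Gr\"unbaum already gives $(\frac{n}{n+1})^n$.

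Your proposed substitute is an orthant-by-orthant comparison after a two-point reduction, resting on ``at most one large value $b_i$ can appear on the event''. It has no evident way to produce this constant. That claim is also false in general: if all $b_i$ lie in $(1,1+\delta/n)$, all $n$ large values can occur with $S<n+\delta$. Likewise, the factor $\delta$ does not come from boundary mass in an extremal example; it comes out of the volume estimate.

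\textbf{Step~2 fails as written.} Randomly permuting independent but non-identically distributed coordinates gives an exchangeable vector, not an i.i.d.\ sample. So an i.i.d.\ form of Gaffke's validity cannot be invoked that way; and since $K_n$ is symmetric, permuting changes nothing anyway. What you need, and what the paper uses, is the merger form of the Vlassis--Thomas theorem: $\Pp(K_n(X_1,\ldots,X_n)\le\alpha)\le\alpha$ for all $\alpha\in[0,1]$ and all independent nonnegative $X_i$ with $\E X_i\le 1$. With that statement, the two-point reduction, the quantile representation and the symmetrization do no work, and the truncation is harmless but unnecessary.
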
 

When \(\delta=1\), Theorem~\ref{thm:main} gives the sharp bound for every \(n\), and hence proves Feige's conjecture (Conjecture~\ref{conj:main}). Sharpness follows by taking each \(X_i\) to be equal to \(n+1\) with probability \(\frac{1}{n+1}\) and to \(0\) otherwise. The arbitrary-\(\delta\) version of Feige's conjecture (Conjecture~\ref{conj:main-2}) remains open.

The paper is organized as follows. In Section~\ref{sec:2}, we prove Theorem~\ref{thm:main} using the finite-sample validity result \cite{vlassis2026exact} of Vlassis and Thomas. In Section~\ref{sec:3}, we discuss the subsequent work \cite{ming2026gaffke} of Ming, Ramdas, Shen, Wang, and Waudby-Smith and its implications.

\subsection*{Acknowledgement}
The authors acknowledge the use of GPT-5.6 Sol in the discovery and exploration of the proof. All arguments were independently verified by the authors. The manuscript was written entirely by the authors, who take full responsibility for its content.

\section{Proof of Theorem~\ref{thm:main}}
\label{sec:2}

In this section, we prove Theorem~\ref{thm:main}. We begin by recalling the definition of a merger from \cite{ming2026gaffke} and recording a simple reduction.

\begin{definition}[Merger]
A Borel measurable function \(F:[0,\infty)^n\to[0,1]\) is called a merger if, for every collection of independent nonnegative random variables \(X_1,\ldots,X_n\) satisfying \(\E(X_i)\leq 1\) for all \(i\), and every \(\alpha\in[0,1]\),
\[
\Pp\left(F(X_1,\ldots,X_n)\leq\alpha\right)\leq\alpha.
\]
\end{definition}

The following lemma reduces the proof to a pointwise bound on a merger.

\begin{lemma}\label{lemma:reduction} Let $\delta>0$, let \(F\) be a merger, and let \(\alpha\in[0,1]\). Suppose that \[ F(x_1,\ldots,x_n)\leq\alpha \] whenever \(x_1,\ldots,x_n\geq 0\) and \[ \sum_{i=1}^n x_i\geq n+\delta. \] Then, for any independent nonnegative random variables \(X_1,\ldots,X_n\) with expectation one, \[ \Pp\left(\sum_{i=1}^n X_i<n+\delta\right)\geq 1-\alpha. \] \end{lemma}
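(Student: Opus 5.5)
The plan is a one-line event inclusion followed by the defining property of a merger. I will compare the event \(\{\sum_i X_i\ge n+\delta\}\) with the event \(\{F(X_1,\ldots,X_n)\le\alpha\}\).

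First, fix independent nonnegative random variables \(X_1,\ldots,X_n\) with \(\E(X_i)=1\). Since \(\E(X_i)=1\le 1\), the family satisfies the hypotheses in the definition of a merger. Moreover, \(X_i\ge 0\) holds pointwise; if it only holds almost surely, I modify the \(X_i\) on a null set, which changes no probability. Therefore, for every outcome \(\omega\) with \(\sum_i X_i(\omega)\ge n+\delta\), the vector \((X_1(\omega),\ldots,X_n(\omega))\) lies in the region where the pointwise hypothesis applies. That hypothesis gives \(F(X_1(\omega),\ldots,X_n(\omega))\le\alpha\). Hence
\[
\Bigl\{\sum_{i=1}^n X_i\ge n+\delta\Bigr\}\subseteq\bigl\{F(X_1,\ldots,X_n)\le\alpha\bigr\}.
\]
Both sets are events: the first because the sum is measurable, the second because \(F\) is Borel measurable.

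Second, I take probabilities and apply the merger property at level \(\alpha\in[0,1]\):
\[
\Pp\Bigl(\sum_{i=1}^n X_i\ge n+\delta\Bigr)\le\Pp\bigl(F(X_1,\ldots,X_n)\le\alpha\bigr)\le\alpha .
\]
Passing to the complement then gives \(\Pp(\sum_i X_i<n+\delta)\ge 1-\alpha\), which is the claim.

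I do not expect a real obstacle here. The only points requiring care are that expectation exactly one is a special case of the merger's condition \(\E(X_i)\le1\), and the measurability and null-set remarks above. The substantive work lies elsewhere: constructing a merger \(F\), via the Vlassis--Thomas result, whose value on the region \(\{\sum_i x_i\ge n+\delta\}\) is bounded by \(\alpha=1-\delta(n/(n+\delta))^n\).
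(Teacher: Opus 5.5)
Your proof is correct and follows the paper's argument exactly: the event inclusion $\{\sum_i X_i\ge n+\delta\}\subseteq\{F(X_1,\ldots,X_n)\le\alpha\}$, then the merger property at level $\alpha$, then passage to the complement. Your remarks on measurability and on modifying the $X_i$ on a null set are harmless refinements that the paper leaves implicit.
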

\begin{proof} By assumption, \[ \left\{\sum_{i=1}^n X_i\geq n+\delta\right\} \subseteq \left\{F(X_1,\ldots,X_n)\leq\alpha\right\}. \] Since \(F\) is a merger and the \(X_i\) are independent nonnegative random variables with expectation one, 
we have \begin{align*}
    &\Pp\left(\sum_{i=1}^n X_i<n+\delta\right)\\
    =&1-\Pp\left(\sum_{i=1}^n X_i\ge n+\delta\right)\\
    \ge &1-\Pp\left(F(X_1,\ldots,X_n)\leq\alpha\right)\\
    \ge &1-\alpha.
\end{align*}
\end{proof}

We apply this reduction to the merger $K_n$ introduced in the recent work \cite{vlassis2026exact} of Vlassis and Thomas.

\begin{definition}[Function \(K_n\)] Let \((D_0,D_1,\ldots,D_n)\sim\operatorname{Dir}(1,1,\ldots,1)\) be a vector of Dirichlet weights. For \(x=(x_1,\ldots,x_n)\in[0,\infty)^n\), define \[ K_n(x_1,\ldots,x_n) = \Pp\left(\sum_{i=1}^n x_iD_i\leq 1\right). \] 
\end{definition}

\begin{theorem}\cite{vlassis2026exact}\label{thm:merger-1}
    The function \(K_n\) is a merger for every \(n\geq 1\).
\end{theorem}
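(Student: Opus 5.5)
The plan is to exploit the structure of \(K_n\) through the exponential representation of Dirichlet weights, reduce to extremal distributions, and then prove the resulting inequality by induction on \(n\). Write \(D_i=E_i/\sum_{j=0}^n E_j\) with \(E_0,\ldots,E_n\) i.i.d.\ standard exponentials. Then
\[
K_n(x)=\Pp\Bigl(\sum_{i=1}^n (x_i-1)E_i\le E_0\Bigr)=\E\Bigl[\exp\Bigl(-\Bigl(\sum_{i=1}^n (x_i-1)E_i\Bigr)_+\Bigr)\Bigr].
\]
This shows that \(K_n\) is continuous and coordinatewise nonincreasing. It also shows that \(K_n(x)=1\) when all \(x_i\le 1\). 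For \(n=1\) it gives \(K_1(x)=1/x\) for \(x\ge1\), so \(\Pp(K_1(X)\le\alpha)=\Pp(X\ge 1/\alpha)\le\alpha\) by Markov's inequality. This is the base case, and it suggests that the general statement is a multivariate Markov-type inequality.

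\textbf{Step 1 (reduction to extremal laws).} Fix \(\alpha\) and the laws of \(X_2,\ldots,X_n\). The quantity \(\Pp(K_n(X)\le\alpha)\) is linear in the law of \(X_1\). We maximize it over the convex set of laws on \([0,\infty)\) with mean at most \(1\). Since \(K_n\) is nonincreasing in \(x_1\), the set \(\{x_1: K_n(x_1,x_2,\ldots)\le\alpha\}\) is a half-line \([t(x_2,\ldots,x_n),\infty)\). The conditional probability is therefore \(\Pp(X_1\ge t)\), and the extremal law of \(X_1\) is a two-point law on \(\{0,b\}\) with mass \(1/b\) at \(b\). Mixing over the threshold shows that it suffices to treat each \(X_i\) as \(b_i\) times a Bernoulli\((1/b_i)\) variable with \(b_i\ge1\), or as a randomization over such laws. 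I expect a careful version of this argument to reduce the claim to showing, for all \(b_1,\ldots,b_n\ge1\),
\[
\sum_{S\subseteq[n]:\,K_n(b_S)\le\alpha}\ \prod_{i\in S}\frac1{b_i}\prod_{i\notin S}\Bigl(1-\frac1{b_i}\Bigr)\le\alpha,
\]
where \(b_S\) has entries \(b_i\) for \(i\in S\) and \(0\) otherwise. The dependence of the threshold \(t\) on the other coordinates may force us to keep general laws in the remaining coordinates and only extremize one coordinate at a time. This is harmless if we proceed inductively.

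\textbf{Step 2 (induction on \(n\)).} Condition on \(X_n\) and \(E_n\). The event \(\{K_n(X)\le\alpha\}\) is a threshold event for an \((n-1)\)-variable function of the same shape, with \(E_0\) effectively replaced by \(E_0-(X_n-1)E_n\). Using the memoryless property of \(E_0\), one checks that \(K_n(x)\) is an average of \(K_{n-1}\)-type quantities rescaled by factors \(e^{-(x_n-1)E_n}\). The inductive hypothesis, that \(K_{n-1}(X')\) is super-uniform, then has to be combined with the one-dimensional Markov bound for \(X_n\).

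\textbf{Main obstacle.} Step 2 is where I expect the real difficulty; this is the content of Gaffke's conjecture. Super-uniformity is not preserved under naive composition, because a product or mixture of super-uniform variables need not be super-uniform. So what we need is a stronger inductive invariant. My first attempt would be to prove the stronger statement that \(\E[\phi(K_n(X))]\le\int_0^1\phi(u)\,du\) for every nonincreasing \(\phi\) in a suitable class, for instance \(\phi(u)=\mathbf 1\{u\le\alpha\}\) together with the functions \(u\mapsto(\alpha-u)_+/\alpha\)-type truncations. I would then try to verify that this class is closed under the one-step integration over \((X_n,E_n)\), using the explicit two-point extremal laws from Step 1. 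If that fails, the fallback is to work directly with the explicit divided-difference formula for the Dirichlet CDF and prove the combinatorial inequality of Step 1 by a symmetrization argument over the subsets \(S\).
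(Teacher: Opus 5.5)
The paper does not prove this theorem; it imports it as a black box from Vlassis and Thomas, whose work resolves Gaffke's conjecture. A self-contained argument therefore has to supply that entire result, and your proposal does not. Your preliminaries are fine: the representation $K_n(x)=\E[\exp(-(\sum_i(x_i-1)E_i)_+)]$, monotonicity, and the case $n=1$ via Markov. The inductive step, however, is not carried out, even though you correctly identify it as the whole difficulty, and the mechanism you sketch for it fails. You cannot condition on $E_n$ inside the event $\{K_n(X)\le\alpha\}$, because the exponential weights are integrated out \emph{inside} $K_n$. Write $Y(x')=\sum_{i<n}(x_i-1)E_i$ and $h_c(x')=\E[\min(1,e^{-(Y(x')+c)})]$. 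Then
\[
K_n(x',x_n)=\int_0^\infty e^{-s}\,h_{(x_n-1)s}(x')\,ds .
\]
So, given $X_n=x_n$, the event is a threshold on an \emph{average} of functions of $X'$. These functions are truncated variants of $K_{n-1}$ (truncation at level $e^{c}$ instead of $1$), not rescalings of it. A threshold of an average is not an average of thresholds, so the inductive hypothesis on $K_{n-1}$ cannot be applied. This is exactly the non-composability you mention, not a way around it.

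Your proposed remedy does not help either. The ``stronger'' invariant $\E[\phi(K_n(X))]\le\int_0^1\phi(u)\,du$ for nonincreasing $\phi$ is equivalent to super-uniformity. For a $[0,1]$-valued $K$, the condition $\Pp(K\le\alpha)\le\alpha$ for all $\alpha$ says precisely that $K$ stochastically dominates a uniform variable, which is the same as the inequality for every nonincreasing $\phi$. Adding truncations $(\alpha-u)_+/\alpha$ therefore adds nothing. A workable induction needs an invariant carrying more information than the law of $K_n(X)$, for instance control of the whole family $h_c$, $c\in\mathbb{R}$. Finding and proving such a statement is the missing idea, and the divided-difference ``fallback'' is likewise only named.

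Step~1 is also not justified as written. After fixing the other coordinates, you maximize $\E[G(X_1)]$, where $G$ is the distribution function of a \emph{random} threshold and hence a general nondecreasing function. The extreme points of the laws with mean at most one are point masses at $x\le 1$ and mean-one two-point laws on $\{a,b\}$ with $a<1<b$. The maximizer need not have $a=0$. Consequently your displayed Bernoulli inequality is only a special case of the merger property, not an equivalent reduction.
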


It therefore remains to bound \(K_n\) on the region where the sum of its arguments is at least \(n+\delta\). The required estimate follows from a geometric inequality for half-space sections of a simplex.

\begin{lemma}\label{lem:pointwise-1}
For each \(0<\delta\leq 1\) and each
\((x_1,\ldots,x_n)\in[0,\infty)^n\) satisfying
\[
\sum_{i=1}^n x_i\geq n+\delta,
\]
we have
\[
K_n(x_1,\ldots,x_n)
\leq
1-\delta\left(\frac{n}{n+\delta}\right)^n.
\]
\end{lemma}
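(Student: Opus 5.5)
The plan has three steps: reduce to the hyperplane $\sum_i x_i=n+\delta$, rewrite $1-K_n$ as a probability about independent exponentials, and then lower-bound that probability. Throughout, $D_i=E_i/(E_0+\cdots+E_n)$ with $E_0,\ldots,E_n$ i.i.d.\ standard exponentials, which is the standard representation of $\operatorname{Dir}(1,\ldots,1)$.

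\textbf{Step 1: reduction to the hyperplane.} Since $D_i\ge 0$, the event $\{\sum_i x_iD_i\le 1\}$ shrinks when any $x_i$ increases, so $K_n$ is coordinatewise nonincreasing. Given $x$ with $\sum_i x_i\ge n+\delta$, lower some coordinates until $\sum_i x_i=n+\delta$; this can only increase $K_n$. Hence it suffices to prove the bound on the slice $\Sigma=\{x\ge 0:\sum_i x_i=n+\delta\}$.

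\textbf{Step 2: rewriting $1-K_n$.} Set $a_i=x_i-1$, so $a_i\ge -1$ and $\sum_i a_i=\delta$. Then $\sum_i x_iD_i>1$ holds exactly when $Y:=\sum_{i=1}^n a_iE_i>E_0$. Conditioning on $E_1,\ldots,E_n$ gives
\[
1-K_n(x)=\E\left[\left(1-e^{-Y}\right)_+\right].
\]
Geometrically, $1-K_n(x)$ is the normalized volume of the part of the standard simplex $\{d\ge 0,\ \sum_{i=1}^n d_i\le 1\}$ cut off by the half-space $x\cdot d>1$.

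\textbf{Step 3: the lower bound.} I would argue that the minimum of $1-K_n$ over $\Sigma$ is attained at a vertex $x=(n+\delta)e_j$. There $1-K_n=\Pp\left(D_1>\frac{1}{n+\delta}\right)=\left(\frac{n+\delta-1}{n+\delta}\right)^n$. It remains to check
\[
\left(\frac{n+\delta-1}{n+\delta}\right)^n\ \ge\ \delta\left(\frac{n}{n+\delta}\right)^n,
\quad\text{i.e.}\quad
\left(1-\frac{1-\delta}{n}\right)^n\ge \delta \quad\text{for } 0<\delta\le 1.
\]
The left side is convex in $\delta$ and the right side is linear. They agree at $\delta=1$, and the derivative of the left side there is $1$, matching the slope of $\delta$. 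Convexity then gives the inequality on all of $(0,1]$, with equality at $\delta=1$. This matches the extremal example $X_i\in\{0,n+1\}$.

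\textbf{Main obstacle.} The hard part is justifying that the minimum over $\Sigma$ occurs at a vertex, i.e.\ some concavity or quasi-concavity of $x\mapsto 1-K_n(x)$ on $\Sigma$, or a symmetrization showing that spreading the mass of $x$ cannot help.

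\begin{itemize}
\item First attempt: a two-coordinate smoothing argument. Fix all coordinates except $x_j,x_k$, with $x_j+x_k$ fixed. Compute $1-K_n$ along that segment as an explicit function, using the divided-difference (B-spline) formula for the distribution of $\sum_i x_iD_i$. Then show this function is minimized at an endpoint of the segment.
\item Fallback: find a pointwise minorant $\varphi(y)\le (1-e^{-y})_+$ whose expectation factorizes, e.g.\ $\varphi(y)=c\,y e^{-sy}$ or $\varphi(y)=c\left(1-e^{-sy}\right)$ with tuned $c,s$. One would then bound
\[
\E\,\varphi(Y)=c\prod_j\frac{1}{1+sa_j}\cdot\Big(\text{explicit factor}\Big)
\]
from below via AM--GM under $\sum_j a_j=\delta$. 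Since the target $\delta\left(\frac{n}{n+\delta}\right)^n$ is slightly weaker than the vertex value for $\delta<1$, a lossy minorant of this kind might still suffice.
\item Difficulty for the fallback: coordinates with $a_j=-1$ (that is, $x_j=0$) make the natural candidates blow up or become negative. I expect to split off the zero coordinates and apply the bound to the remaining ones, which amounts to replacing $n$ by a smaller $m$.
\end{itemize}
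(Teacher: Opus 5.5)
Your Steps 1 and 2 are correct, but the claim that Step 3 rests on is false. For $n\ge 2$ and small $\delta$, the minimum of $1-K_n$ over $\Sigma$ is not attained at a vertex. Take $x=(1,\ldots,1,1+\delta)\in\Sigma$. Then $\sum_i x_iD_i>1$ if and only if $\delta D_n>D_0$, i.e.\ $\delta E_n>E_0$, so $1-K_n(x)=\frac{\delta}{1+\delta}$. This tends to $0$ with $\delta$, whereas the vertex value $\bigl(\frac{n-1+\delta}{n+\delta}\bigr)^n$ always exceeds $\bigl(\frac{n-1}{n}\bigr)^n$. Already for $n=2$, $\delta=\frac12$ you get $\frac13<\frac{9}{25}$.

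This failure is forced. Since $K_n$ is a merger, $\inf_\Sigma(1-K_n)$ cannot exceed the probability $\frac{\delta}{1+\delta}$ realized by $X_1=\cdots=X_{n-1}=1$ and $X_n\in\{0,1+\delta\}$. Consequently:
\begin{itemize}
\item For $n=2$, the function $1-K_2$ on the segment $x_1+x_2=2+\delta$ is not minimized at its endpoints, so the two-coordinate smoothing cannot succeed.
\item Your convexity computation only verifies the bound at points that are not minimizers.
\item The fallback cannot reach $\delta=1$, which is the case giving Feige's conjecture. There the lemma is an equality at $x=(n+1)e_1$. Since $Y=nE_1-\sum_{j\ge 2}E_j$ has a positive density on all of $\mathbb{R}$, any minorant $\varphi\le(1-e^{-y})_+$ with $\E\varphi(Y)=\E(1-e^{-Y})_+$ must coincide with $(1-e^{-y})_+$ almost everywhere. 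No function of the form $c\,ye^{-sy}$ or $c(1-e^{-sy})$ does this.
\end{itemize}

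The missing idea is that you never need to locate the minimizer. In your own geometric picture, $1-K_n(x)$ is the uniform measure of the part of the simplex $\Delta$ on which the linear functional $\ell(d)=\sum_{i=1}^n x_id_i$ exceeds $1$. On $\Delta$, $\ell$ has minimum $0$ (at the vertex $e_0$) and takes the value $\frac{S}{n+1}$ at the centroid, where $S=\sum_i x_i\ge n+\delta$. Hence $\{\ell>1\}\supseteq\{\ell-\frac{S}{n+1}>\lambda\frac{S}{n+1}\}$ with $\lambda=\frac{1-\delta}{n+\delta}$. In words, the cutting hyperplane lies beyond the centroid by at most the fraction $\lambda$ of the centroid-to-minimum distance, uniformly in $x$. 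A Gr\"unbaum-type inequality for half-spaces at a prescribed relative distance from the centroid then bounds this measure below by $\bigl(\frac{n}{n+1}\bigr)^n(1+\lambda)^{n-1}(1-n\lambda)=\delta\bigl(\frac{n}{n+\delta}\bigr)^n$ for every $x$ at once. The paper invokes Letwin's Theorem~4 for the centered simplex here. This argument also makes your Step 1 unnecessary.
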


\begin{proof}
Let \(P\) denote the hyperplane
\[
P=
\left\{
(d_0,\ldots,d_n)\in\mathbb{R}^{n+1}:
\sum_{i=0}^n d_i=1
\right\},
\]
and let \(\Delta\) denote the \(n\)-simplex
\[
\Delta=
\left\{
(d_0,\ldots,d_n)\in[0,\infty)^{n+1}:
\sum_{i=0}^n d_i=1
\right\}.
\]
The centroid of \(\Delta\) is
\(
\left(\frac{1}{n+1},\ldots,\frac{1}{n+1}\right).
\)
By the density formula for the Dirichlet distribution,
\((D_0,\ldots,D_n)\) is uniformly distributed on \(\Delta\).

Let
\[
u=
\left(
-\frac{\sum_{i=1}^n x_i}{n+1},
x_1-\frac{\sum_{i=1}^n x_i}{n+1},
\ldots,
x_n-\frac{\sum_{i=1}^n x_i}{n+1}
\right)
\]
and
\[
v=
\left(
D_0-\frac{1}{n+1},
D_1-\frac{1}{n+1},
\ldots,
D_n-\frac{1}{n+1}
\right).
\]
Both \(u\) and \(v\) are parallel to \(P\), and \(u\) is nonzero. Moreover,
\begin{align*}
\langle u,v\rangle
&=
-\frac{\sum_{i=1}^n x_i}{n+1}
\left(D_0-\frac{1}{n+1}\right)
+
\sum_{i=1}^n
\left(x_i-\frac{\sum_{i=1}^n x_i}{n+1}\right)
\left(D_i-\frac{1}{n+1}\right) \\
&=
-\frac{\sum_{i=1}^n x_i}{n+1}D_0
+\sum_{i=1}^n x_iD_i
-\frac{\sum_{i=1}^n x_i}{n+1}\sum_{i=1}^n D_i \\
&=
\sum_{i=1}^n x_iD_i-\frac{\sum_{i=1}^n x_i}{n+1}.
\end{align*}
As \((D_0,\ldots,D_n)\) ranges over \(\Delta\), the minimum of
\(\langle u,v\rangle\) is
\(
-\frac{\sum_{i=1}^n x_i}{n+1},
\)
attained at \((1,0,\ldots,0)\). Since \(\sum_{i=1}^n x_i\ge n+\delta\), we have
\[  \left\{\langle u,v\rangle >\frac{1-\delta}{n+\delta}\frac{\sum_{i=1}^n x_i}{n+1} \right\} \subseteq  \left\{\sum_{i=1}^nx_iD_i>1\right\}.\]
Therefore, \cite[Theorem~4]{letwin2024generalization}, applied to the
centered simplex, gives
\begin{align*}
K_n(x_1,\ldots,x_n)
&=1-
\Pp\left(\sum_{i=1}^n x_iD_i>1\right) \\
&\le1-
\Pp\left(\langle u,v\rangle >\frac{1-\delta}{n+\delta}\frac{\sum_{i=1}^n x_i}{n+1}\right) \\
&\le 1-
\left(\frac{n}{n+1}\right)^n
\left(1+\frac{1-\delta}{n+\delta}\right)^{n-1}
\left(1-n\frac{1-\delta}{n+\delta}\right) \\
&=1-
\delta\left(\frac{n}{n+\delta}\right)^n.
\end{align*}
\end{proof}

We can now complete the proof of the main theorem.

\begin{proof}[Proof of Theorem~\ref{thm:main}] Set \[ \alpha = 1-\delta\left(\frac{n}{n+\delta}\right)^n. \] Since \(0<\delta\leq 1\), we have \(\alpha\in[0,1]\). By Lemma~\ref{lem:pointwise-1}, we have \[ K_n(x_1,\ldots,x_n)\leq\alpha \] whenever \(x_1,\ldots,x_n\geq 0\) and \[ \sum_{i=1}^n x_i\geq n+\delta. \] Thus, by Lemma~\ref{lemma:reduction} and Theorem~\ref{thm:merger-1}, we have \[ \Pp\left(\sum_{i=1}^n X_i<n+\delta\right) \geq 1-\alpha = \delta\left(\frac{n}{n+\delta}\right)^n. \] 
\end{proof}

\section{The merger \(K_{2}^{\mathrm{ad}}\)}
\label{sec:3}

Although the merger \(K_n\) allows us to prove Feige's conjecture (Conjecture~\ref{conj:main}), the bound obtained from \(K_n\) is not sharp for arbitrary \(\delta\) (Conjecture~\ref{conj:main-2}). In recent work, Ming, Ramdas, Shen, Wang, and Waudby-Smith \cite{ming2026gaffke} studied whether \(K_n\) can be improved as a merger. 

A merger \(G\) is said to dominate a merger \(F\) if \(G\leq F\) pointwise, and to strictly dominate \(F\) if the inequality is strict at some point. A merger is admissible if it is not strictly dominated by any other merger.

For \(n=2\), they constructed a merger \(K_2^{\mathrm{ad}}\), which is the unique admissible merger among those that dominate \(K_2\). This is particularly relevant here, since Lemma~\ref{lemma:reduction} shows that a smaller merger may yield a stronger lower-tail bound. We now recall their construction.

\begin{definition}[Function $K_2^{\mathrm{ad}}$]
For \(x,y\geq 0\), let \(a=\min(x,y)\) and \(b=\max(x,y)\). When \(0\leq a<1<b\), let \(\tau(a,b)\) be the larger root of
\[
t^2-b(1+a)t+ab=0.
\]
Define \(K_2^{\mathrm{ad}}\colon[0,\infty)^2\to[0,1]\) by
\[
K_2^{\mathrm{ad}}(a,b)
=
\begin{cases}
1,
& b\leq 1,\\
\frac{2}{\tau(a,b)}-\frac{1}{\tau(a,b)^2},
& 0\leq a<1<b,\\
\frac{1}{ab},
& 1\leq a.
\end{cases}
\]
\end{definition}

\begin{theorem}\cite[Theorem~3.6]{ming2026gaffke}\label{thm:merger-2}
The function \(K_2^{\mathrm{ad}}\) is a merger.
\end{theorem}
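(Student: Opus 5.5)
The plan is to verify the merger property one level set at a time, reduce to two-point distributions, and then settle a finite-dimensional inequality in which the quadratic defining \(\tau\) should appear as the equality case.

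\textbf{Step 1: level sets.} Fix \(\alpha\in(0,1)\); the cases \(\alpha\in\{0,1\}\) are trivial. Let
\[
S_\alpha=\left\{(x,y)\in[0,\infty)^2 : K_2^{\mathrm{ad}}(x,y)\le\alpha\right\}.
\]
It suffices to show \(\Pp((X,Y)\in S_\alpha)\le\alpha\) for all independent nonnegative \(X,Y\) with \(\E X,\E Y\le 1\).

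I will first check that \(K_2^{\mathrm{ad}}\) is symmetric, continuous, and nonincreasing in each coordinate. For the middle branch, note that \(\tau\ge 1\) there, so \(t\mapsto 2/t-1/t^2\) is decreasing. Also \(\tau\) is increasing in \(a\) and \(b\), as one sees by implicit differentiation of \(t^2-b(1+a)t+ab=0\). Continuity across the pieces holds because \(\tau(a,b)\to b\) as \(b\downarrow 1\), and \(\tau(1,b)=b\), which gives \(2/b-1/b^2=1/b\cdot(2-1/b)\); I still need to check that this actually matches \(1/(ab)\) at \(a=1\).

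Granting these properties, \(S_\alpha\) is a closed, symmetric up-set. Its lower boundary is a nonincreasing curve \(y=g_\alpha(x)\) in which, for \(x\) near \(0\), the threshold \(g_\alpha(x)\) is finite. For example, \(K(0,b)=2/b-1/b^2\), so \((0,b)\in S_\alpha\) once \(b\) is large.

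\textbf{Step 2: reduction to two-point laws.} Fix the law of \(Y\) and set \(h(x)=\Pp((x,Y)\in S_\alpha)\). This function is nondecreasing and upper semicontinuous. Then
\[
\Pp((X,Y)\in S_\alpha)=\E\, h(X),
\]
and the supremum over laws of \(X\) with \(\E X\le 1\) equals the concave majorant of \(h\) evaluated at \(1\). By Carathéodory this supremum is attained by a law supported on at most two points \(p\le 1\le q\). Swapping roles, the same holds for \(Y\). Hence it suffices to prove the bound when
\[
X\in\{p_1,q_1\},\qquad Y\in\{p_2,q_2\},
\]
each law having mean exactly \(1\).

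Because \(S_\alpha\) is an up-set, I can further push the lower points down to \(0\) whenever doing so does not change which cells lie in \(S_\alpha\). This lowers the mean and allows the upper mass to grow. After this, only a few configurations remain, according to which of the four corners \((p_1,p_2),(p_1,q_2),(q_1,p_2),(q_1,q_2)\) lie in \(S_\alpha\).

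\textbf{Step 3: case analysis, the main obstacle.} If only \((q_1,q_2)\) lies in \(S_\alpha\), the bound follows from Markov's inequality:
\[
\Pp\le \frac{1}{q_1q_2}\le K(q_1,q_2)\le\alpha .
\]
This uses \(K\ge 1/(ab)\), which must be checked on the middle branch.

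The hard case is an L-shaped configuration: \((q_1,q_2)\), \((q_1,p_2)\) and possibly \((p_1,q_2)\) all in \(S_\alpha\), with \(p_2\in(0,1)\). Here the probability is
\[
\frac{1-p_1}{q_1-p_1}\cdot\frac{1-p_2}{q_2-p_2}+\cdots,
\]
and one must maximize over the free parameters subject to the corners lying on or above the curve \(g_\alpha\). I expect the defining quadratic \(t^2-b(1+a)t+ab=0\) to arise exactly as the first-order (tangency) condition of this optimization. That is, the extremal laws should be \(X\in\{a,\tau\}\) and \(Y\in\{0,\tau'\}\), with the corner \((a,b)\) on the boundary, and the optimal value should equal \(2/\tau-1/\tau^2\).

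So I would parametrize the boundary point \((a,b)\) with \(a<1<b\), write the success probability of the relevant two-point pair as an explicit rational function, and show by calculus that its maximum equals \(K_2^{\mathrm{ad}}(a,b)\). This optimization, and confirming that no other corner configuration does better, is where I expect the main difficulty to lie.
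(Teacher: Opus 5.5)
The paper gives no proof of this statement; it quotes \cite[Theorem~3.6]{ming2026gaffke}. Your Steps 1--2 are a sound framework: level sets, reduction to a pair of two-point laws with mean one, and classification by which corners lie in \(S_\alpha\). But Step 3 is the only place where the formula for \(K_2^{\mathrm{ad}}\) enters, and you leave it as an expectation. As written, the proposal therefore does not prove the merger property.

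What is missing is an explicit description of \(S_\alpha\). Write \(\alpha=1-s^2\) with \(s\in(0,1)\), and set \(t=1/(1-s)\). Note that \(K_2^{\mathrm{ad}}=1-(1-1/\tau)^2\) on the middle branch, and that \(t^2-b(1+a)t+ab\) equals \(1-b<0\) at \(t=1\). Hence, for \(a<1<b\),
\[
K_2^{\mathrm{ad}}(a,b)\le\alpha \iff \tau(a,b)\ge t \iff t^2-b(1+a)t+ab\le 0 \iff b\ge g_s(a):=\frac{1}{(1-s)(1+as)},
\]
and the same criterion holds at \(a=1\). So \((q_1,p_2)\in S_\alpha\) if and only if \(q_1\ge g_s(p_2)\). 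The corner \((p_1,p_2)\) never lies in \(S_\alpha\), so your one-, two- and three-cell cases are exhaustive.

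For the L-shaped case, set \(\pi_i=\frac{1-p_i}{q_i-p_i}\). Then \(1-\pi_1=\frac{q_1-1}{q_1-p_1}\) is nondecreasing in \(q_1\), and similarly for \(\pi_2\). Put \(A=1-p_1(1-s)\), \(B=1-p_2(1-s)\) and \(c=p_1p_2s(1-s)\). Substituting the boundary values \(q_1=g_s(p_2)\) and \(q_2=g_s(p_1)\) gives
\[
(1-\pi_1)(1-\pi_2)\ \ge\ \frac{sB}{A-c}\cdot\frac{sA}{B-c}\ \ge\ s^2 .
\]
This holds because \(AB-(A-c)(B-c)=c(A+B-c)\ge 0\) and \(A-c,\,B-c\ge s^2>0\). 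Therefore \(\Pp\le 1-s^2=\alpha\).

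The two-cell case needs an input you did not state: \(\pi_1\le 1/q_1\le 1/g_s(p_2)\le 1/g_s(1)=\alpha\). Equivalently, \(K_2^{\mathrm{ad}}(a,b)\ge 1/b\) on the middle branch.

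Several of your side claims are wrong.
\begin{itemize}
\item \(\tau(1,b)\) is not \(b\) but \(b+\sqrt{b^2-b}\). Continuity at \(a=1\) still holds, because there \(\tau^2=b(2\tau-1)\), so \(2/\tau-1/\tau^2=1/b\). With your value you would have found a spurious discontinuity.
\item The inequality \(K_2^{\mathrm{ad}}\ge 1/(ab)\) is false on the middle branch; at \(a=1/4\), \(b=2\) one has \(1/(ab)=2>1\). You do not need it: \(q_1,q_2\ge 1\), so \((q_1,q_2)\) lies on the third branch, where \(K_2^{\mathrm{ad}}(q_1,q_2)=1/(q_1q_2)\) exactly.
\item The defining quadratic is not a first-order tangency condition. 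Equality in the L-case bound requires \(c=0\), i.e.\ \(p_1=0\) or \(p_2=0\), with the \(q_i\) on the boundary, and it holds along those entire edges. These edges are exactly your predicted pairs \(X\in\{a,\tau(a,b)\}\), \(Y\in\{0,b\}\) with \((a,b)\) on the boundary curve.
\end{itemize}
The quadratic is simply the condition that such a pair attains probability exactly \(2/\tau-1/\tau^2\). A calculus search for an interior critical point would therefore not locate the maximiser; a factorization like the one above is what closes the argument.
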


The improved merger satisfies the following pointwise estimate.

\begin{lemma}\label{lem:pointwise-2}
For each \(\delta>0\) and each \(x,y\geq 0\) satisfying
\[
x+y\geq 2+\delta,
\]
we have
\[
K_2^{\mathrm{ad}}(x,y)
\leq
1-
\min\left(
\frac{\delta}{1+\delta},
\left(\frac{1+\delta}{2+\delta}\right)^2
\right).
\]
\end{lemma}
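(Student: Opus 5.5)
The plan is a case analysis on \(a=\min(x,y)\) and \(b=\max(x,y)\). Since \(a+b\ge 2+\delta>2\), we must have \(b>1\), so the first branch of \(K_2^{\mathrm{ad}}\) never occurs. The two remaining branches will give the two terms of the minimum: \(a\ge 1\) yields \(\frac{\delta}{1+\delta}\), and \(a<1<b\) yields whichever of the two terms is smaller. It suffices to show that \(K_2^{\mathrm{ad}}\le 1-m\), where \(m\) denotes the minimum in the statement.

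\emph{Case \(1\le a\).} Here \(K_2^{\mathrm{ad}}=\frac{1}{ab}\). Write \(s=a+b\ge 2+\delta\). Then \(a\in[1,s-1]\), and the product \(a(s-a)\) is concave in \(a\). Its minimum on this interval is therefore at an endpoint, which gives \(ab\ge s-1\ge 1+\delta\). Hence
\[
K_2^{\mathrm{ad}}\le \frac{1}{1+\delta}=1-\frac{\delta}{1+\delta}\le 1-m .
\]

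\emph{Case \(0\le a<1<b\).} Write \(\tau=\tau(a,b)\) and \(f(t)=t^2-b(1+a)t+ab\). First, \(f(1)=1-b<0\), so \(\tau>1\). Next, note that
\[
\frac{2}{\tau}-\frac{1}{\tau^2}=1-\left(1-\frac1\tau\right)^2 ,
\]
so it suffices to show \(\left(1-\frac1\tau\right)^2\ge m\). This in turn follows from a lower bound on \(\tau\). To obtain it, I would avoid solving for \(\tau\) explicitly and instead solve \(f(\tau)=0\) for \(b\):
\[
b=\frac{\tau^2}{(1+a)\tau-a}=\frac{\tau^2}{\tau+a(\tau-1)} .
\]
The denominator is positive because \(\tau>1\). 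For fixed \(\tau>1\), consider
\[
h(a)=a+\frac{\tau^2}{\tau+a(\tau-1)} \quad\text{on } [0,1].
\]
It is convex, being a linear function plus the reciprocal of a positive linear function. Hence
\[
2+\delta\le a+b=h(a)\le \max\bigl(h(0),h(1)\bigr)=\max\left(\tau,\;1+\frac{\tau^2}{2\tau-1}\right).
\]
Consequently, either \(\tau\ge 2+\delta\), or \(1+\frac{\tau^2}{2\tau-1}\ge 2+\delta\).

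\emph{The two subcases.} If \(\tau\ge 2+\delta\), then \(\left(1-\frac1\tau\right)^2\ge\left(\frac{1+\delta}{2+\delta}\right)^2\ge m\). In the other subcase, the inequality rearranges to \(\tau^2-2(1+\delta)\tau+(1+\delta)\ge 0\). Since \(\tau>1\) exceeds the smaller root \((1+\delta)-\sqrt{\delta(1+\delta)}<1\), this forces
\[
\tau\ge (1+\delta)+\sqrt{\delta(1+\delta)}=\sqrt{1+\delta}\,\bigl(\sqrt{1+\delta}+\sqrt\delta\bigr).
\]
Rationalizing gives \(\frac1\tau=1-\sqrt{\frac{\delta}{1+\delta}}\), so \(\left(1-\frac1\tau\right)^2\ge\frac{\delta}{1+\delta}\ge m\). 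In both subcases \(K_2^{\mathrm{ad}}\le 1-m\), which completes the argument.

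The main obstacle is the middle case, because \(\tau\) is only implicitly defined. The inversion trick, expressing \(b\) in terms of \((a,\tau)\) and using convexity in \(a\), is what I would try first. As a fallback, one could use implicit differentiation: \(\partial_a f=b(1-\tau)<0\), \(\partial_b f=a-(1+a)\tau<0\), and \(f'(\tau)>0\) together show that \(\tau\) increases in both \(a\) and \(b\). This reduces the problem to the segment \(a+b=2+\delta\), after which one checks the two endpoints.
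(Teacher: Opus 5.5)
Your argument is correct, and its skeleton is the paper's. You use the same split into $a\ge 1$ and $0\le a<1<b$. In the first case the paper gets $ab\ge a+b-1\ge 1+\delta$ from $(a-1)(b-1)\ge 0$ rather than from concavity. In the second case both arguments reduce to the endpoints $a=0$ and $a=1$ by convexity in $a$.

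The difference lies in how the implicitly defined $\tau$ is handled. The paper fixes the target level $s=\min\bigl(\sqrt{\delta/(1+\delta)},\frac{1+\delta}{2+\delta}\bigr)$. It quotes Lemma~3.4 of Ming et al.\ for the sublevel-set description $K_2^{\mathrm{ad}}\le 1-s^2 \iff b\ge \frac{1}{(1-s)(1+as)}$. It then checks that the concave function $(2+\delta-a)(1-s)(1+as)$ is at least $1$ at both endpoints.

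You instead fix the actual root $\tau$ and invert the quadratic to get $b=\tau^2/(\tau+a(\tau-1))$. Substituting $\tau=1/(1-s)$ gives exactly $\frac{1}{(1-s)(1+as)}$, so your inversion is in effect a proof of the cited lemma. Your convexity of $h$ plays the role of the paper's endpoint check, run at the actual level $\tau$ instead of the target level $1/(1-s)$.

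Your version is self-contained. It also shows transparently that the configuration $a=0$ produces the term $\bigl(\frac{1+\delta}{2+\delta}\bigr)^2$ and $a=1$ produces $\frac{\delta}{1+\delta}$. The paper's version is shorter and, by building the minimum into $s$, handles both terms at once without your subcase split.

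One small slip: after rationalizing you should write $\frac1\tau\le 1-\sqrt{\frac{\delta}{1+\delta}}$. Equality holds for the root $(1+\delta)+\sqrt{\delta(1+\delta)}$, not for $\tau$ itself.
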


\begin{proof}
Let \(a=\min(x,y)\) and \(b=\max(x,y)\). Since $$2b \ge a+b=x+y\ge 2+\delta>2,$$ we have $b>1$. 

If $a\ge 1$, then 
$$K_2^{\mathrm{ad}}(a,b)=\frac{1}{ab}\le \frac1{a+b-1}
    \le\frac{1}{1+\delta}
    \le 1-
\min\left(
\frac{\delta}{1+\delta},
\left(\frac{1+\delta}{2+\delta}\right)^2
\right).$$

Suppose that $0\le a<1<b$. By \cite[Lemma~3.4]{ming2026gaffke}, \[
K_2^{\mathrm{ad}}(x,y)
\leq
1-
\min\left(
\frac{\delta}{1+\delta},
\left(\frac{1+\delta}{2+\delta}\right)^2
\right)\] if and only if \[ b\ge \frac{1}{(1-s)(1+as)}\] where $$s= \min\left(
\sqrt{\frac{\delta}{1+\delta}},
\frac{1+\delta}{2+\delta}
\right)<1.$$

Since a concave function can only attain its minimum at a boundary point, we have 
\begin{align*}
    &b(1-s)(1+as)\\
    \ge &(2+\delta -a)(1-s)(1+as)\\
    \ge &\min\left((2+\delta)(1-s),(1+\delta)(1-s^2)\right)\\
    \ge&1.
\end{align*}
This proves the result.
\end{proof}

Combining this estimate with the merger property gives the sharp two-variable form of Feige's conjecture for every \(\delta>0\).

\begin{theorem}\label{thm:n2-arbitrary-delta}
Let \(X_1\) and \(X_2\) be independent nonnegative random variables with expectation one. Then, for every \(\delta>0\),
\[
\Pp\left(X_1+X_2<2+\delta\right)
\geq
\min\left(
\frac{\delta}{1+\delta},
\left(\frac{1+\delta}{2+\delta}\right)^2
\right).
\]
\end{theorem}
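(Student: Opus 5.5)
This follows the same scheme as the proof of Theorem~\ref{thm:main}, with $K_2^{\mathrm{ad}}$ in place of $K_n$. Fix $\delta>0$ and set
\[
\alpha = 1-\min\left(\frac{\delta}{1+\delta},\left(\frac{1+\delta}{2+\delta}\right)^2\right).
\]
The first step is to check that $\alpha\in[0,1]$. Both quantities inside the minimum lie in $(0,1)$ for $\delta>0$, so this is immediate.

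Next, apply Lemma~\ref{lemma:reduction} with $n=2$ and $F=K_2^{\mathrm{ad}}$. By Theorem~\ref{thm:merger-2}, $K_2^{\mathrm{ad}}$ is a merger. Strictly speaking, the merger is defined on $[0,\infty)^2$ through $a=\min(x,y)$ and $b=\max(x,y)$. It is therefore a symmetric Borel function of $(x,y)$, since it is piecewise continuous on Borel pieces. So it is a legitimate $F$ in the sense of the definition.

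The hypothesis of Lemma~\ref{lemma:reduction} is that $K_2^{\mathrm{ad}}(x,y)\le\alpha$ whenever $x,y\ge0$ and $x+y\ge 2+\delta$. This is exactly Lemma~\ref{lem:pointwise-2}.

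Lemma~\ref{lemma:reduction} then gives
\[
\Pp(X_1+X_2<2+\delta)\ge 1-\alpha=\min\left(\frac{\delta}{1+\delta},\left(\frac{1+\delta}{2+\delta}\right)^2\right),
\]
which is the claim.

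There is no real obstacle, since the substantive work lives in Lemma~\ref{lem:pointwise-2} and Theorem~\ref{thm:merger-2}. The only point deserving a remark is that the lemma is stated for random variables with expectation exactly one, whereas mergers are defined for $\E(X_i)\le 1$. The exact-expectation case is a special case, so the reduction applies directly.

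If I wanted to double-check the pointwise lemma, I would go to its case $0\le a<1<b$. There the concavity argument in $a$ over $[0,1]$ uses $b\ge 2+\delta-a$. One then has to confirm the two endpoint inequalities $(2+\delta)(1-s)\ge1$ and $(1+\delta)(1-s^2)\ge1$. Both follow from the definition of $s$ as the minimum of $\sqrt{\delta/(1+\delta)}$ and $(1+\delta)/(2+\delta)$.
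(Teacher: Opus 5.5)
Your proposal is correct and matches the paper's proof exactly: you set the same $\alpha$, take the pointwise bound from Lemma~\ref{lem:pointwise-2}, and combine Lemma~\ref{lemma:reduction} with the merger property of $K_2^{\mathrm{ad}}$ from Theorem~\ref{thm:merger-2}. Your side checks also hold: $\alpha\in(0,1)$, the expectation-one case is a special case of $\E(X_i)\le 1$, and the two endpoint inequalities follow from the definition of $s$.
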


\begin{proof}
Set \[
\alpha
=
1-
\min\left(
\frac{\delta}{1+\delta},
\left(\frac{1+\delta}{2+\delta}\right)^2
\right).
\]
By Lemma~\ref{lem:pointwise-2}, we have 
$$K_2^{\mathrm{ad}}(x_1,x_2)\le \alpha$$
whenever $x_1,x_2\ge 0$ and $x_1+x_2\ge 2+\delta.$ Thus, by Lemma~\ref{lemma:reduction} and Theorem~\ref{thm:merger-2}, we have 
\[
\Pp\left(X_1+X_2<2+\delta\right)
\ge1-\alpha=
\min\left(
\frac{\delta}{1+\delta},
\left(\frac{1+\delta}{2+\delta}\right)^2
\right).
\]
\end{proof}

Although Theorem~\ref{thm:n2-arbitrary-delta} is elementary and already known, it is instructive to compare the bounds obtained from \(K_2\) and \(K_2^{\mathrm{ad}}\). For \(0<\delta<1\), the bound obtained from \(K_2\), namely Theorem~\ref{thm:main} with \(n=2\), is not sharp, whereas the bound obtained from \(K_2^{\mathrm{ad}}\) in Theorem~\ref{thm:n2-arbitrary-delta} is sharp for every \(\delta>0\).

To see the sharpness of the first term in Theorem~\ref{thm:n2-arbitrary-delta}, take \(X_1=1\) deterministically and let \(X_2\) equal \(1+\delta\) with probability \(\frac{1}{1+\delta}\) and \(0\) otherwise. Then
\[
\Pp\left(X_1+X_2<2+\delta\right)=\frac{\delta}{1+\delta}.
\]
For the second term, let \(X_1\) and \(X_2\) be independent, with each \(X_i\) equal to \(2+\delta\) with probability \(\frac{1}{2+\delta}\) and \(0\) otherwise. Then
\[
\Pp\left(X_1+X_2<2+\delta\right)=\left(\frac{1+\delta}{2+\delta}\right)^2.
\]

Constructing higher-dimensional analogues of \(K_2^{\mathrm{ad}}\) is substantially more difficult, leaving the arbitrary-\(\delta\) version of Feige's conjecture (Conjecture~\ref{conj:main-2}) open.

\bibliographystyle{alpha}
\bibliography{ref}

\end{document}